\documentclass[12pt]{amsart}
\usepackage{amscd}     
\usepackage{amssymb}
\usepackage{amsmath, amsthm, graphics, dsfont}
\usepackage{xypic}     
\LaTeXdiagrams        
\usepackage[all]{xy}
\xyoption{2cell} \UseAllTwocells \xyoption{frame} \CompileMatrices
\allowdisplaybreaks[3]
\usepackage{amsfonts}
\usepackage[normalem]{ulem}
\usepackage{hyperref}
\usepackage{tikz}
\usepackage{mathtools}
\usepackage{verbatim} 
\usepackage{MnSymbol}
\usepackage{latexsym}
\usepackage{epsfig}
\usepackage{enumerate}
\usepackage{times}
\usepackage{xcolor}
\usepackage{geometry}

\newgeometry{vmargin={25mm,25mm}, hmargin={25mm,25mm}}   % set the margins

\newtheorem{prop}{Proposition}[section]

%\theoremstyle{definition}
%\theorembodyfont{\upshape}

\newtheorem{numex}[prop]{Example}

\numberwithin{equation}{section}

\newcommand{\C}{\mathcal{C}}

\newcommand{\K}{\mathcal{K}}
\newcommand{\Z}{\mathcal{Z}}
\newcommand{\cO}{\mathcal{O}}
\newcommand{\G}{\mathcal{G}}

%%%%%%%%%%

\date{\today}

\begin{document}

\title{A note on quantum K-theory of root constructions}

\author[Hsian-Hua Tseng]{Hsian-Hua Tseng}
\address{Department of Mathematics\\ Ohio State University\\ 100 Math Tower, 231 West 18th Ave. \\ Columbus,  OH 43210\\ USA}
\email{hhtseng@math.ohio-state.edu}

\begin{abstract}
We consider K-theoretic Gromov-Witten theory of root constructions. We calculate some genus $0$ K-theoretic Gromov-Witten invariants of a root gerbe. We also obtain a K-theoretic relative/orbifold correspondence in genus $0$.
\end{abstract}

\maketitle

\section{Introduction}

\subsection{\'Etale gerbes}
Let $X$ be a smooth projective variety over the complex numbers. An \'etale gerbe $\mathcal{G}$ over $X$, 
\iffalse
\begin{equation*}
\mathcal{G}\to X,    
\end{equation*}
\fi
may be thought of as a fiber bundle over $X$ whose fibers are the classifying stack $BG$ of a certain finite group $G$. Geometric properties of $\mathcal{G}$ are of purely stack-theoretic nature. 

In \cite{hhps}, physical theories on an \'etale gerbe $\mathcal{G}$ are considered, leading to the formulation of {\em decomposition conjecture} (also known as {\em gerbe duality}). Interpreted in mathematics, the decomposition conjecture for $\mathcal{G}$ asserts that the geometry of $\mathcal{G}$ is equivalent to the geometry of a disconnected space $\widehat{\mathcal{G}}$ equipped with a $\mathbb{C}^*$-gerbe. The decomposition conjecture has been proven in several mathematical aspects in \cite{tt1}. 

\subsection{Gromov-Witten theory}
Gromov-Witten theory of a target $Z$ is defined using moduli stacks $\K_{g,n}(Z,d)$ of stable maps to $Z$. Gromov-Witten invariants of $Z$ are integrals of natural cohomology classes on $\K_{g,n}(Z,d)$ against the virtual fundamental class of $\K_{g,n}(Z,d)$.

%Gromov-Witten theory is one important aspect of the modern enumerative geometry. 

The Gromov-Witten theory of an \'etale gerbe $\mathcal{G}$ has been studied, with a point of view towards the decomposition conjecture,  in various generalities in \cite{ajt1}, \cite{ajt3}, \cite{j}, \cite{tt2}, \cite{t0}. 

\subsection{Quantum K-theory}
Quantum K-theory, introduced in \cite{g}, \cite{l}, is the K-theoretic counterpart of Gromov-Witten theory. K-theoretic Gromov-Witten invariants of a target $Z$ are Euler characteristics of natural K-theory classes on $\K_{g,n}(Z,d)$ tensored with the {\em virtual structure sheaf} $\mathcal{O}_{\K_{g,n}(Z,d)}^{vir}$.
%\begin{equation*}
%\end{equation*}
An extension of quantum K-theory to target Deligne-Mumford stacks is given in \cite{tt}.

Quantum Hirzebruch-Riemann-Roch theorems \cite{gt}, \cite{tt}, \cite{g1}, \cite{g2} imply that quantum K-theory can be determined by (cohomological) Gromov-Witten theory. Since (cohomological) Gromov-Witten theory of \'etale gerbes has been shown to satisfy the decomposition conjecture in many cases, it is natural to ask if quantum K-theory of an \'etale gerbe $\mathcal{G}$ can be studied with a viewpoint towards the decomposition conjecture. This note contains an attempt to address this for {\em root gerbes} over $X$ in genus $0$. 

\subsection{Root gerbes}
Given a line bundle $L\to X$ and an integer $r>0$, one can associate the stack $\sqrt[r]{L/X}$ of $r$-th roots of $L$, which is a smooth Deligne-Mumford stack whose points over an $X$-scheme $f: S\to X$ are
\begin{equation*}
\sqrt[r]{L/X}(S)=\{(M, \phi) \,|\, M\to S \text{ line bundle}, \phi: M^{\otimes r}\overset{\simeq}{\longrightarrow} f^*L \}.    
\end{equation*}
The coarse moduli space of $\sqrt[r]{L/X}$ is $X$. Furthermore, the natural map $\rho: \sqrt[r]{L/X}\to X$ has the structure of a $\mu_r$-gerbe. 

The strategy employed to study quantum K-theory of $\sqrt[r]{L/X}$ in this note is the same as that of \cite{ajt1}. Namely, we examine the structure of moduli stacks of genus $0$ stable maps to $\sqrt[r]{L/X}$ and apply pushforward results for virtual structure sheaves. The main result of this note is Proposition \ref{prop:main}.

\subsection{Root stacks}
Given a smooth irreducible divisor $D\subset X$ and an integer $r>0$, one can associate the stack $X_{D,r}$ of $r$-th roots of $X$ along $D$. In \cite{acw}, genus $0$ {\em relative} Gromov-Witten invariants of $(X,D)$ and Gromov-Witten invariants of $X_{D,r}$ are shown to be the same when $r$ is sufficiently large. Their proof uses pushforwards of virtual fundamental classes and an intermediate moduli space. In Section \ref{sec:root_stack}, we explain how to adapt their argument to obtain a similar result for genus $0$ K-theoretic Gromov-Witten invariants, see (\ref{eqn:KGW_rel_orb}).

\subsection{Outline}
The rest of this note is organized as follows. Section \ref{sec:QK_review} recalls notations used the definition of K-theoretic Gromov-Witten invariants of Deligne-Mumford stacks. In Section \ref{sec:structure_map} we discuss properties of the structure morphism for moduli stacks of genus $0$ stable maps to a root gerbe. In Section \ref{sec:pushforward} we discuss pushforwards of virtual structure sheaves. Section \ref{sec:inv} contains the proof of our main result and Section \ref{sec:banded_gerbe} discusses an extension of the main result to a more general class of gerbes. In Section \ref{sec:root_stack}, we discuss a K-theoretic version of relative/orbifold correspondence. 
In Section \ref{sec:discussion} we discuss some related questions.

\subsection{Acknowledgment}
This note is inspired by the results on virtual pushforwards in K-theory in \cite{chl1} and \cite{chl2}. It is a pleasure to thank the authors Y.-C. Chou, L. Herr, and Y.-P. Lee. The author also thanks E. Sharpe for discussions. The author is supported in part by Simons Foundation Collaboration Grant.

\iffalse
{\noindent \bf Funding:} 

{\noindent \bf Data availability statement:} This work contains no data other than the paper itself. The paper itself is available on the arXiv.

{\noindent \bf Conflict of interest:} The author reports no conflict of interests.
\fi

\section{Results}

\subsection{Quantum K-theory of target stacks}\label{sec:QK_review}
We begin with recalling the definition of K-theoretic Gromov-Witten invariants of Deligne-Mumford stacks, as given in \cite{tt}. Let $\Z$ be a smooth proper Deligne-Mumford stack with projective coarse moduli space $Z$. The moduli stack of $n$-pointed genus $g$ degree $d$ stable maps to $\Z$ is denoted by 
$\K_{g,n}(\Z,d)$.    
\iffalse
parametrizes the following objects: 
\begin{displaymath}
    \xymatrix{ 
    (\C, \{\Sigma_i\}_{i=1}^n)\ar[d]\ar[r]^{\quad\quad f} & \Z\\
    S. &
    }
\end{displaymath}
Here $(\C, \{\Sigma_i\})\to S$ is a family of balanced nodal twisted (orbifold) curves and $f$ is a stable map. 
\fi
The detailed definition can be found in \cite{av}. It is known that $\K_{g,n}(\Z,d)$ is a proper Deligne-Mumford stack equipped with a perfect obstruction theory, see \cite{av}, \cite{agv}. Applying the recipe of \cite{l} to this perfect obstruction theory yields a virtual structure sheaf $\cO_{\K_{g,n}(\Z,d)}^{vir}$.    
\iffalse
Sending a stable map $f: (\C, \{\Sigma_i\}_{i=1}^n)/S\to \Z$ to its restriction $f|_{\Sigma_i}: \Sigma_i\to \Z$ yields the $i$-th
\fi
There are evaluation maps 
$ev_i: \K_{g,n}(\Z, d)\to \bar{I}\Z$,    
where $\bar{I}\Z$ is the {\em rigidified inertia stack} of $\Z$. See \cite{agv} for more details on the construction of evaluation maps.

K-theoretic Gromov-Witten invariants of $\Z$ are Euler characteristics of the following form:
\begin{equation}\label{defn:K_inv}
\chi\left(\K_{g,n}(\Z,d), \cO_{\K_{g,n}(\Z,d)}^{vir}\otimes \bigotimes_{i=1}^n ev_i^* \alpha_i\right)\in \mathbb{Z}, \quad \alpha_1,...,\alpha_n\in K^*(\bar{I}\Z).    
\end{equation}

\subsection{Structure morphism}\label{sec:structure_map}
Sending a stable map $f: (\C, \{\Sigma_i\}_{i=1}^n)/S\to \Z$ to the induced map $\bar{f}: (C, \{\bar{\Sigma}_i\}_{i=1}^n)/S\to Z$ between coarse moduli spaces yields a morphism
\begin{equation}\label{eqn:structure_map}
\K_{g,n}(\Z,d)\to \K_{g,n}(Z,d).    
\end{equation}
We examine (\ref{eqn:structure_map}) in the special case $\Z=\sqrt[r]{L/X}$ and $g=0$.

As explained in \cite[Section 3.1]{ajt1}, the rigidified inertia stack of $\sqrt[r]{L/X}$ is a disjoint union of components $\bar{I}(\sqrt[r]{L/X})_g$ indexed by $g\in \mu_r$. As in \cite[Definition 3.3]{ajt1}, for $g_1,...,g_n\in \mu_r$, put
\begin{equation*}
\K_{0,n}(\sqrt[r]{L/X},d)^{\vec{g}}=\bigcap_{i=1}^{n}ev_i^{-1}(\bar{I}(\sqrt[r]{L/X})_{g_i}).    
\end{equation*}
In order for $\K_{0,n}(\sqrt[r]{L/X},d)^{\vec{g}}$ to be non-empty, the elements $g_1,...,g_n$ are required to satisfy certain condition, see \cite[Section 3.1]{ajt1}.

We consider the restriction of (\ref{eqn:structure_map}) to $\K_{0,n}(\sqrt[r]{L/X},d)^{\vec{g}}$:
\begin{equation}\label{eqn:structure_map1}
p: \K_{0,n}(\sqrt[r]{L/X},d)^{\vec{g}}\to \K_{0,n}(X,d).    
\end{equation}
The structure of the map $p$ has been analyzed in \cite{ajt1}. We reproduce \cite[Diagram (26)]{ajt1} as follows:
\begin{equation}\label{main_diagram}
\xymatrix{ 
\K_{0,n}(\sqrt[r]{L/X},d)^{\vec{g}}\ar[r]_{\quad \quad t}\ar[dr]_{s}\ar@/^1.5pc/[rrr]^{p} & P_n^{\vec{g}}\ar[d]\ar[r]_{r'}\ar@{}[dr] | {\square} & P\ar[d]_{q'}\ar[r]_{r}\ar@{}[dr] | {\square} & \K_{0,n}(X,d)\ar[d]^q  \\
\, &\mathfrak{Y}_{0,n,d}^{\vec{g}}\ar[r] & \mathfrak{M}_{0,n,d}^{tw}\ar[d]_{s'}\ar[r]\ar@{}[dr] | {\square} & \mathfrak{M}_{0,n,d}\ar[d]^{s''} \\
\, & \, & \mathfrak{M}_{0,n}^{tw}\ar[r] & \mathfrak{M}_{0,n}.
}
\end{equation}
Here $\mathfrak{M}_{0,n}$ is the stack of $n$-pointed genus $0$ prestable curves (see e.g. \cite{bs} for a discussion), and $\mathfrak{M}_{0,n}^{tw}$ is the stack of $n$-pointed genus $0$ prestable twisted curves (see \cite{o}). $\mathfrak{M}_{0,n,d}$ and $\mathfrak{M}_{0,n,d}^{tw}$ are variants of $\mathfrak{M}_{0,n}$ and $\mathfrak{M}_{0,n}^{tw}$ parametrizing prestable (twisted) curves weighted by $d\in H^2(X,\mathbb{Z})$, see \cite[Section 3.2]{ajt1} for an introduction and \cite{bs} and \cite{t} for further details.

In (\ref{main_diagram}), the stack $\mathfrak{Y}_{0,n,d}^{\vec{g}}$ is constructed in \cite[Definition 3.12]{ajt1} by applying the root construction to a certain divisor of $\mathfrak{M}_{0,n,d}$. It follows that the composition $\mathfrak{Y}_{0,n,d}^{\vec{g}}\to \mathfrak{M}_{0,n,d}^{tw}\to \mathfrak{M}_{0,n,d}$ is proper and birational.
The stacks $P$ and $P_n^{\vec{g}}$ are defined by cartesian squares. The map $s$ is defined by \cite[Lemma 3.18]{ajt1}.

\begin{numex}
When $X$ is a point, the line bundle $L$ is necessarily trivial. In this case $\sqrt[r]{L/X}=B\mu_r$. The moduli stacks $\K_{0,n}(B\mu_r)^{\vec{g}}$ and $\K_{0,n}(\text{pt})=\overline{\mathcal{M}}_{0,n}$ are smooth of expected dimensions. The morphism (\ref{eqn:structure_map1}) in this case has been studied in \cite{bc}. It is shown in \cite{bc} that there is a factorization 
$\K_{0,n}(B\mu_r)^{\vec{g}}\to \mathcal{N}\to \overline{\mathcal{M}}_{0,n}$,    
where $\K_{0,n}(B\mu_r)^{\vec{g}}\to \mathcal{N}$ is the stack of $r$-th roots of certain line bundle, and $\mathcal{N}\to \overline{\mathcal{M}}_{0,n}$ is a root construction.
\end{numex}

\subsection{Pushforward}\label{sec:pushforward}
We now examine obstruction theories. Since the map $s''$ is \'etale, the standard obstruction theory on $\K_{0,n}(X,d)$ relative to $\mathfrak{M}_{0,n}$ can be viewed as a obstruction theory $E_{\K_{0,n}(X,d)}^\bullet\to L_q^\bullet$ on $\K_{0,n}(X,d)$ relative to the morphism $q$. The stack $P$ can be equipped with an obstruction theory relative to the morphism $q'$ by pulling back $E_{\K_{0,n}(X,d)}^\bullet$. The stack $P_n^{\vec{g}}$ can be equipped with an obstruction theory relative to $\mathfrak{Y}_{0,n,d}^{\vec{g}}$ by pulling back the obstruction theory on $P$. 

Since both maps $s'$ and $\mathfrak{Y}_{0,n,d}^{\vec{g}}\to \mathfrak{M}_{0,n,d}^{tw}$ are \'etale \cite[Lemma 3.15]{ajt1}, the standard obstruction theory on $\K_{0,n}(\sqrt[r]{L/X},d)^{\vec{g}}$ relative to $\mathfrak{M}_{0,n}^{tw}$ can be viewed as an obstruction theory $E_{\K_{0,n}(\sqrt[r]{L/X},d)^{\vec{g}}}^\bullet\to L_s^\bullet$ on $\K_{0,n}(\sqrt[r]{L/X},d)^{\vec{g}}$ relative to the morphism $s$. 

By \cite[Lemma 4.1]{ajt1}, $E_{\K_{0,n}(X,d)}^\bullet$ pulls back to $E_{\K_{0,n}(\sqrt[r]{L/X},d)^{\vec{g}}}^\bullet$. We then have the following results on virtual structure sheaves. 
\begin{enumerate}
    \item Since $\mathfrak{Y}_{0,n,d}^{\vec{g}}\to \mathfrak{M}_{0,n,d}$ is proper and birational, by \cite[Theorem 1.12]{chl2}, we have 
    \begin{equation}\label{eqn:vir_pushforward1}
      (r\circ r')_*[\cO_{P_n^{\vec{g}}}^{vir}]=[\cO_{\K_{0,n}(X,d)}^{vir}].  
    \end{equation}

    \item By \cite[Theorem 3.19]{ajt1}, the map $t: \K_{0,n}(\sqrt[r]{L/X},d)^{\vec{g}}\to P_n^{\vec{g}}$ is a $\mu_r$-gerbe. Hence by \cite[Proposition 1.9]{chl2}, we have 
    \begin{equation}\label{eqn:vir_pushforward2}
      t_*[\cO_{\K_{0,n}(\sqrt[r]{L/X},d)^{\vec{g}}}^{vir}]=[\cO_{P_n^{\vec{g}}}^{vir}].  
    \end{equation}
    
\end{enumerate}
 
\subsection{Invariants}\label{sec:inv}
The evaluation maps on $\K_{0,n}(X,d)$ and $\K_{0,n}(\sqrt[r]{L/X},d)^{\vec{g}}$ fit into the following commutative diagram:
\begin{equation*}
\xymatrix{
\K_{0,n}(\sqrt[r]{L/X},d)^{\vec{g}}\ar[r]^{\quad  ev_i}\ar[d]^{p} & \bar{I}(\sqrt[r]{L/X})_{g_i}\ar[d]^{\bar{I}\rho} \\
\K_{0,n}(X,d)\ar[r]^{ev_i} & \bar{I}X=X.
}
\end{equation*}

Consider the descendant line bundles $L_1, ..., L_n\to \K_{0,n}(X,d)$ associated to the marked points. The following is the main result of this note:

\begin{prop}\label{prop:main}
For $\alpha_1,...,\alpha_n\in K^*(X)$ and $k_1,...,k_n\in \mathbb{Z}$, we have 
\begin{equation*}
\begin{split}
\chi\left(\K_{0,n}(\sqrt[r]{L/X},d)^{\vec{g}}, \cO_{\K_{0,n}(\sqrt[r]{L/X},d)^{\vec{g}}}^{vir}\otimes \bigotimes_{i=1}^n ((p^*L_i)^{\otimes k_i}\otimes ev_i^*((\bar{I}\rho)^*\alpha_i))  \right)\\
=\chi\left(\K_{0,n}(X,d), \cO_{\K_{0,n}(X,d)}^{vir}\otimes \bigotimes_{i=1}^n (L_i^{\otimes k_i}\otimes ev_i^*(\alpha_i))  \right).
\end{split}
\end{equation*}
\end{prop}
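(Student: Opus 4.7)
The proposition should follow immediately from the two pushforward identities (\ref{eqn:vir_pushforward1}) and (\ref{eqn:vir_pushforward2}) combined with the projection formula, once the integrand on the left-hand side is recognized as the pullback of the integrand on the right-hand side under $p = r \circ r' \circ t$.

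First, I would observe that the commutative diagram of evaluation maps yields
\[
ev_i^*((\bar{I}\rho)^*\alpha_i) = (\bar{I}\rho \circ ev_i)^*\alpha_i = (ev_i \circ p)^*\alpha_i = p^*(ev_i^*\alpha_i)
\]
on $\K_{0,n}(\sqrt[r]{L/X},d)^{\vec g}$. Combined with the explicit appearance of $p^*L_i$ on the left-hand side, this gives
\[
\bigotimes_{i=1}^n \bigl((p^*L_i)^{\otimes k_i}\otimes ev_i^*((\bar{I}\rho)^*\alpha_i)\bigr) = p^*\!\left(\bigotimes_{i=1}^n (L_i^{\otimes k_i}\otimes ev_i^*\alpha_i)\right).
\]
Denote the right-hand integrand (without the virtual structure sheaf) by $\beta \in K^*(\K_{0,n}(X,d))$.

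Next I would compute $p_*[\cO_{\K_{0,n}(\sqrt[r]{L/X},d)^{\vec g}}^{vir}]$ by peeling off the factorization $p = r \circ r' \circ t$ from (\ref{main_diagram}): identity (\ref{eqn:vir_pushforward2}) (the gerbe pushforward) gives $t_*[\cO^{vir}_{\K_{0,n}(\sqrt[r]{L/X},d)^{\vec g}}] = [\cO^{vir}_{P_n^{\vec g}}]$, and identity (\ref{eqn:vir_pushforward1}) (the proper birational pushforward from \cite{chl2}) gives $(r\circ r')_*[\cO^{vir}_{P_n^{\vec g}}] = [\cO^{vir}_{\K_{0,n}(X,d)}]$. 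Composing,
\[
p_*[\cO^{vir}_{\K_{0,n}(\sqrt[r]{L/X},d)^{\vec g}}] = [\cO^{vir}_{\K_{0,n}(X,d)}].
\]

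Finally, since $p$ is proper (both source and target are proper over $\operatorname{Spec}\mathbb{C}$), the projection formula in K-theory yields
\[
\chi\!\left(\K_{0,n}(\sqrt[r]{L/X},d)^{\vec g}, \cO^{vir}\otimes p^*\beta\right) = \chi\!\left(\K_{0,n}(X,d), p_*\cO^{vir}\otimes \beta\right) = \chi\!\left(\K_{0,n}(X,d), \cO^{vir}\otimes \beta\right),
\]
which is the claimed equality.

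The only step that requires any thought is the pullback identification for $\beta$: one must be sure that $p^*L_i$ on the left truly refers to the pullback of the descendant class from $\K_{0,n}(X,d)$ (not the intrinsic descendant on the orbifold moduli, which would differ by a stabilizer factor at the $i$-th marking). With the notation of the proposition this is built in by fiat, so there is nothing further to verify. All the technical content — properness and birationality of $\mathfrak{Y}^{\vec g}_{0,n,d}\to \mathfrak{M}_{0,n,d}$, the gerbe structure of $t$, and the compatibility of obstruction theories which makes both sheaves literally virtual structure sheaves of the relevant perfect obstruction theories — is already recorded in Section \ref{sec:pushforward}.
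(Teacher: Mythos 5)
Your proposal is correct and follows essentially the same route as the paper's own proof: identify the left-hand integrand as $p^*$ of the right-hand one via $\bar{I}\rho\circ ev_i=ev_i\circ p$, apply the projection formula along the proper map $p$, and conclude from the factorization $p=r\circ r'\circ t$ together with (\ref{eqn:vir_pushforward1}) and (\ref{eqn:vir_pushforward2}). The only difference is cosmetic (you push forward the virtual structure sheaf first and then invoke the projection formula, while the paper does it in the opposite order), so there is nothing to add.
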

\begin{proof}
Since $\bar{I}\rho\circ ev_i=ev_i\circ p$, projection formula gives 
\begin{equation*}
p_*\left(\cO_{\K_{0,n}(\sqrt[r]{L/X},d)^{\vec{g}}}^{vir}\otimes \bigotimes_{i=1}^n ((p^*L_i)^{\otimes k_i}\otimes ev_i^*((\bar{I}\rho)^*\alpha_i))\right)=p_*(\cO_{\K_{0,n}(\sqrt[r]{L/X},d)^{\vec{g}}}^{vir})\otimes \bigotimes_{i=1}^n (L_i^{\otimes k_i}\otimes ev_i^*(\alpha_i)).     
\end{equation*}
Since $p=r\circ r'\circ t$, the result follows from (\ref{eqn:vir_pushforward1}) and (\ref{eqn:vir_pushforward2}).
\end{proof}

\subsection{Banded abelian gerbes}\label{sec:banded_gerbe}
Suppose $G$ is a finite abelian group. Suppose $\mathcal{G}\to X$ is a gerbe banded by $G$. Then the isomorphism class of $\mathcal{G}\to X$ is classified by the cohomology group $H^2(X, G)$, where $G$ is viewed as a constant sheaf on $X$. We say that $\mathcal{G}\to X$ is {\em essentially trivial} if the image of its class is trivial for maps $H^2(X,G)\to H^2(X,\mathbb{C}^*)$ induced by group homomorphisms $G\to \mathbb{C}^*$. Examples of essentially trivial gerbes include toric gerbes \cite{t0}.

Let $\mathcal{G}\to X$ be an essentially trivial gerbe over $X$. Then by \cite[Lemma A.2]{ajt1}, $\mathcal{G}$ is of the form
\begin{equation}
\mathcal{G}\simeq \sqrt[r_1]{L_1/X}\times_X\sqrt[r_1]{L_2/X}\times_X...\times_X\sqrt[r_k]{L_k/X}    
\end{equation}
where $L_1,...,L_k$ are line bundles over $X$ and $r_1,...,r_k$ are natural numbers.

Consider the morphism (\ref{eqn:structure_map}) in this case:
\begin{equation}\label{eqn:structure_map_G}
\K_{0,n}(\mathcal{G},d)\to \K_{0,n}(X,d).      
\end{equation}
By the alaysis of \cite[Appendix A]{ajt1}, (\ref{eqn:structure_map_G}) also fits into diagram like (\ref{main_diagram}), with a factorization %of (\ref{eqn:structure_map_G})
\begin{equation}
\K_{0,n}(\mathcal{G},d)^{\vec{g}}\to P_n^{\vec{g}}\to \K_{0,n}(X,d).          
\end{equation}
Here $\vec{g}$ is defined in \cite[Definition A.5]{ajt1}.

The map $P_n^{\vec{g}}\to \K_{0,n}(X,d)$ is by construction virtually birational, hence we can apply \cite[Theorem 1.12]{chl2} to it. By \cite[Theorem A.6]{ajt1}, the map $\K_{0,n}(\mathcal{G},d)^{\vec{g}}\to P_n^{\vec{g}}$ is also a gerbe, so we can apply \cite[Proposition 1.9]{chl2} to it. Therefore, we may repeat the arguments in Section \ref{sec:inv} to extend Proposition \ref{prop:main} to essentially trivial banded abelian gerbes $\mathcal{G}\to X$.

\subsection{Root stacks}\label{sec:root_stack}
Let $D\subset X$ be a smooth irreducible divisor. For an integer $r>0$, one can construct the stack $X_{D,r}$ of $r$-th roots of $X$ along $D$, see \cite{c} and \cite[Appendix B]{agv}. The natural map 
\begin{equation}\label{eqn:map_XDr}
X_{D,r}\to X    
\end{equation}
 is an isomorphism over $X\setminus D$ and is a $\mu_r$-gerbe over $D$. Denote by $D_r\subset X_{D,r}$ the inverse image of $D$ under (\ref{eqn:map_XDr}).

It is shown in \cite{acw} that genus $0$ relative Gromov-Witten invariants of the pair $(X,D)$ are the same as Gromov-Witten invariants of $X_{D,r}$ for $r$ sufficiently large. Here we explain how their method can be adapted to K-theoretic Gromov-Witten theory.

By \cite{af}, there is an isomorphism between moduli spaces\footnote{We omit curve classes from notations.} of stable relative maps,
\begin{equation*}
\Psi: \overline{M}_{0,n}(X_{D,r}, D_r)\to \overline{M}_{0,n}(X,D),    
\end{equation*}
see also \cite[Theorem 2.1]{acw}. This implies an identification of virtual structure sheaves,
\begin{equation}\label{eqn:rel_rel_str_sheaf}
\Psi_*[\mathcal{O}_{\overline{M}_{0,n}(X_{D,r}, D_r)}^{vir}]=[\mathcal{O}_{\overline{M}_{0,n}(X,D)}^{vir}].    
\end{equation}
There is a natural map that forgets the relative structure
\begin{equation*}
\Phi:\overline{M}_{0,n}(X_{D,r}, D_r)\to \overline{M}_{0,n}(X_{D,r}).    
\end{equation*}
Assume that $r$ is sufficiently large. The proof of \cite[Theorem 2.2]{acw} implies that $\Phi$ is virtually birational. Hence by \cite[Theorem 1.12]{chl2}, we have
\begin{equation}\label{eqn:rel_orb_str_sheaf}
\Phi_*[\mathcal{O}_{\overline{M}_{0,n}(X_{D,r}, D_r)}^{vir}]=[\mathcal{O}_{\overline{M}_{0,n}(X_{D,r})}^{vir}].    
\end{equation}
Evaluation maps of these moduli spaces are compatible with $\Psi$ and $\Phi$, see \cite[Section 2.2]{acw}. It follows from (\ref{eqn:rel_rel_str_sheaf}) and (\ref{eqn:rel_orb_str_sheaf}) that for $\alpha_1,...,\alpha_k\in K^*(X)$, $\gamma_1,...,\gamma_l\in K^*(D)$, and $r$ sufficiently large, we have
\begin{equation}\label{eqn:KGW_rel_orb}
\begin{split}
&\chi\left(\overline{M}_{0,n}(X_{D,r}),\mathcal{O}_{\overline{M}_{0,n}(X_{D,r})}^{vir}\otimes \bigotimes_{i=1}^k L_i^{k_i}\otimes ev_i^*(\alpha_i)\otimes \bigotimes_{j=1}^l L_j^{m_j}\otimes ev_j^*(\beta_j) \right)\\
=&
\chi\left(\overline{M}_{0,n}(X,D),\mathcal{O}_{\overline{M}_{0,n}(X,D)}^{vir}\otimes \bigotimes_{i=1}^k L_i^{k_i}\otimes ev_i^*(\alpha_i)\otimes \bigotimes_{j=1}^l L_j^{m_j}\otimes ev_j^*(\beta_j) \right).
\end{split}
\end{equation}
We view (\ref{eqn:KGW_rel_orb}) as a correspondence between genus $0$ K-theoretic Gromov-Witten invariants of $(X,D)$ and $X_{D,r}$.

\section{Comments}\label{sec:discussion}

\subsection{On higher genus}
\subsubsection{Root gerbes}
For $h>0$, the genus-$h$ version of the morphism (\ref{eqn:structure_map1}),
\begin{equation}\label{eqn:structure_map2}
\K_{h,n}(\sqrt[r]{L/X},d)^{\vec{g}}\to \K_{h,n}(X,d),  \end{equation}
has been studied in \cite{ajt3}. The map (\ref{eqn:structure_map2}) is understood well enough so that a result on the pushforward of virtual fundamental {\em classes} is proven in \cite{ajt3}. However, pushforward of virtual structure {\em sheaves} under (\ref{eqn:structure_map2}) appears to be difficult. The key issue is that, in order to apply \cite[Proposition 1.9, Theorem 1.12]{chl2}, we need (\ref{eqn:structure_map2}) be factored into virtual birational maps and gerbes. A factorization of (\ref{eqn:structure_map2}) was obtained for more general banded gerbes in \cite[Diagram (41)]{ajt3}. In our setting this gives
\begin{equation}
\K_{h,n}(\sqrt[r]{L/X},d)^{\vec{g}}\to P_{h,n}^{\vec{g}}\to \K_{h,n}(X,d).  
\end{equation}
For a root gerbe $\sqrt[r]{L/X}\to X$, one can check that $P_{h,n}^{\vec{g}}\to \K_{h,n}(X,d)$ is also virtually birational. However, by the discussion of \cite[Section 6.2]{ajt3}, the map $\K_{h,n}(\sqrt[r]{L/X},d)^{\vec{g}}\to P_{h,n}^{\vec{g}}$ is a composition of two maps, one has degree $1/r$ and the other had degree $r^{2h}>1$. The degree $r^{2h}$-map cannot possibly be a gerbe. Hence \cite[Proposition 1.9]{chl2} is not applicable to $\K_{h,n}(\sqrt[r]{L/X},d)^{\vec{g}}\to P_{h,n}^{\vec{g}}$. This prevents us from obtaining genus-$h$ version of Proposition \ref{prop:main}. 

\subsubsection{Root stacks}
The relative/orbifold correspondence in cohomological Gromov-Witten theory has been extended to higher genus in \cite{ty}. A K-theoretic relative/orbifold correspondence in higher genus is an interesting question. It is unlikely that virtual pushforwards used in genus $0$ will be enough in higher genus. Some foundational work in K-theoretic Gromov-Witten theory is required in order to follow the arguments in \cite{ty}.

\subsection{On virtual pushforward}
There are many situations in cohomological Gromov-Witten theory in which ``virtually birational'' maps occur, see \cite{HW} for a detailed list. In addition, we note that the morphism $u$ in \cite[Lemma 4.16]{cclt} is virtually birational. Hence we can apply \cite[Theorem 1.12]{chl2} to obtain a calculation of the K-theoretic $J$-function of weighted projective spaces. Since such a result is a special case of the work \cite{z} on quantum K-theory of toric stacks, we do not persue it in details.

\subsection{On decomposition conjecture}
Consider an \'etale gerbe $\G\to X$. As defined in (\ref{defn:K_inv}), K-theoretic Gromov-Witten invariants of $\G$ have insertions coming from the K-theory $K^*(\bar{I}\G)$ of the rigidified inertia stack $\bar{I}\G$. Since $\G\subset \bar{I}\G$ is a connected component, the K-theory $K^*(\G)$ of $\G$ is a direct summand of $K^*(\bar{I}\G)$. The proof of Proposition \ref{prop:main} only allows classes in $K^*(\bar{I}\sqrt[r]{L/X})$ pulled back from $X$. Studying K-theoretic Gromov-Witten invariants of $\sqrt[r]{L/X}$ with other kinds of insertions requires new ideas. 

For root gerbes $\G\to X$ arising in toric geometry, e.g. weighted projective spaces and more general toric gerbes, it may be possible to study the decomposition conjecture by analyzing the K-theoretic $I$-functions calculated in \cite{z} in a manner similar to \cite{t0}. An additive decomposition of the K-theory $K(\bar{I}\G)$ is a basic question.


\begin{thebibliography}{12}

\bibitem{acw} D. Abramovich, C. Cadman, J. Wise, {\em Relative and orbifold Gromov-Witten invariants}, Algebr. Geom. 4 (2017), no. 4, 472--500.

\bibitem{af} D. Abramovich, B. Fantechi, {\em Orbifold techniques in degeneration formulas}, Ann. Sc. Norm. Super. Pisa Cl. Sci. (5) 16 (2016), no. 2, 519--579.

\bibitem{agv} D. Abramovich, T. Graber, A. Vistoli, {\em Gromov-Witten theory of Deligne-Mumford stacks}, Amer. J. of Math. 130 (2008), no. 5, 1337--1398.

\bibitem{av} D. Abramovich, A. Vistoli, {\em Compactifying the space of stable maps}, J. Amer. Math. Soc. 15 (2002), 27--75.

\bibitem{ajt1} E. Andreini, Y. Jiang, H.-H. Tseng, {\em Gromov-Witten theory of root gerbes I: structure of genus 0 moduli spaces},  J. Differential Geom. Vol. 99, no. 1 (2015), 1--45.

\bibitem{ajt3} E. Andreini, Y. Jiang, H.-H. Tseng, {\em Gromov-Witten theory of banded gerbes over schemes}, arXiv:1101.5996.

\bibitem{bs} Y. Bae, J. Schmidt, {\em Chow rings of stacks of prestable curves. I}, Forum Math. Sigma 10, Paper No. e28, 47 p. (2022).

\bibitem{bc} A. Bayer, C. Cadman, {\em Quantum cohomology of $[\mathbb{C}^N/\mu_r]$}, Compos. Math. 146, No. 5, 1291--1322 (2010).

\bibitem{c} C. Cadman, {\em Using stacks to impose tangency conditions on curves}, Amer. J. Math. 129 (2007), no. 2, 405--427.

\bibitem{chl1} Y.-C. Chou, L. Herr, Y.-P. Lee, {\em The log product formula in quantum K--theory}, Math. Proc. Cambridge Philos. Soc. 175 (2023), no. 2, 225--252.

\bibitem{chl2} Y.-C. Chou, L. Herr, Y.-P. Lee, {\em Higher Genus Quantum K--theory}, arXiv:2305.10137.

\bibitem{cclt} T. Coates, A. Corti, Y.-P. Lee, H.-H. Tseng, {\em The quantum orbifold cohomology of weighted projective spaces}, Acta Math. 202, No. 2, 139--193 (2009).

\bibitem{g} A. Givental, {\em On the WDVV equation in quantum K-theory}, Mich. Math. J. 48, Spec. Vol., 295--304 (2000).

\bibitem{g1} A. Givental, {\em Permutation-equivariant quantum K-theory IX. Quantum Hirzebruch-Riemann-Roch in all genera}, arXiv:1709.03180. 


\bibitem{g2} A. Givental, {\em Permutation-equivariant quantum K-theory X. Quantum Hirzebruch-Riemann-Roch in genus 0}, arXiv:1710.02376.  

\bibitem{gt} A. Givental, V. Tonita, {\em The Hirzebruch-Riemann-Roch theorem in true genus-0 quantum K-theory}, in: ``Symplectic, Poisson, and noncommutative geometry'', 43--91, Mathematical Sciences Research Institute Publications 62, Cambridge University Press (2014).

\bibitem{hhps} S. Hellerman, A. Henriques, T. Pantev, E. Sharpe, {\em Cluster decomposition, T-duality, and gerby CFTs}, Adv. Theor. Math. Phys., 11 (5) (2007), 751--818.

\bibitem{HW} L. Herr, J. Wise, {\em Costello's pushforward formula: errata and generalization}, Manuscripta Math. 171 (2023), no. 3--4, 621--642, arXiv:2103.10348. 


\bibitem{j} P. Johnson, {\em Equivariant GW theory of stacky curves}, Commun. Math. Phys. 327, No. 2, 333--386 (2014).

\bibitem{l} Y.-P. Lee, {\em Quantum K--theory. I: Foundations}, Duke Math. J. 121, No. 3, 389--424 (2004).

\bibitem{o} M. Olsson, {\em On (Log) twisted curves}, Compos. Math. 143, No. 2, 476--494 (2007).

\bibitem{tt1} X. Tang, H. -H. Tseng, {\em Duality theorems for \'etale gerbes on orbifolds}, Adv. Math. 250 (2014), 496--569.

\bibitem{tt2} X. Tang, H. -H. Tseng, {\em A quantum Leray-Hirsch theorem for banded gerbes}, J. Differential Geom. 119 (3), 459--511, (2021). %{\tt arXiv:1602. 03564}. 

\bibitem{tt} V. Tonita, H.-H. Tseng, {\em Quantum orbifold Hirzebruch-Riemann-Roch theorem in genus zero}, arXiv:1307.0262.

\bibitem{t0} H.-H. Tseng, {\em On Gromov-Witten theory of toric gerbes}, Albanian J. Math. 14 (2020), no. 1, 3--23.

\bibitem{t} H.-H. Tseng, {\em On the tautological rings of stacks of twisted curves}, to appear in Kyoto Journal of Mathematics, arXiv:2205.05529.

\bibitem{ty} H.-H. Tseng, F. You, {\em Higher genus relative and orbifold Gromov-Witten invariants}, Geom. Topol. 24 (2020), no. 6, 2749--2779.

\bibitem{z} M. Zhang, {\em Quantum K-theory of toric stacks}, preprint available on the author's website.

\end{thebibliography}
\end{document}